\numberwithin{equation}{section} 
\newcommand{\N}{\ensuremath{\mathbb{N}}}
\newcommand{\R}{\ensuremath{\mathbb{R}}}
\newcommand{\C}{\ensuremath{\mathbb{C}}}
\newcommand{\K}{\ensuremath{\mathbb{K}}}
\newcommand{\Ac}{\ensuremath{\mathcal{A}}}
\newcommand{\Bc}{\ensuremath{\mathcal{B}}}
\newcommand{\Cc}{\ensuremath{\mathcal{C}}}
\newcommand{\Lc}{\ensuremath{\mathcal{L}}}
\newcommand{\Pc}{\ensuremath{\mathcal{P}}}
\begin{document}
\thispagestyle{plain}

\topmargin -18pt\headheight 12pt\headsep 25pt

\ifx\cs\documentclass \footheight 12pt \fi \footskip 30pt

\textheight 625pt\textwidth 431pt\columnsep 10pt\columnseprule 0pt

 \renewcommand{\headfont}{\slshape}      
 \renewcommand{\pnumfont}{\upshape}      
\setcounter{secnumdepth}{5}             
\setcounter{tocdepth}{5}             

\newtheorem{Definition}{Definition}[section]
\newtheorem{Satz}[Definition]{Satz}
\newtheorem{Lemma}[Definition]{Lemma}
\newtheorem{Korollar}[Definition]{Korollar}
\newtheorem{Corollary}[Definition]{Corollary}
\newtheorem{Bemerkung}[Definition]{Bemerkung}
\newtheorem{Remark}[Definition]{Remark}
\newtheorem{Proposition}[Definition]{Proposition}
\newtheorem{Beispiel}[Definition]{Beispiel}
\newtheorem{Theorem}[Definition]{Theorem}



\pdfbookmark[1]{Titlepage}{title}
\begin{center}
	{\LARGE Nonlocal-to-Local Convergence for a Cahn--Hilliard Tumor Growth Model\\[2ex]
	\Large\textsc{Christoph Hurm \& Maximilian Moser}
}
\end{center}

\begin{abstract}
	\noindent\textbf{Abstract.} We consider a local Cahn-Hilliard-type model for tumor growth as well as a nonlocal model where, compared to the local system, the Laplacian in the equation for the chemical potential is replaced by a nonlocal operator. The latter is defined as a convolution integral with suitable kernels parametrized by a small parameter. For sufficiently smooth bounded domains in three dimensions, we prove convergence of weak solutions of the nonlocal model towards strong solutions of the local model together with convergence rates with respect to the small parameter. The proof is done via a Gronwall-type argument and a convergence result with rates for the nonlocal integral operator towards the Laplacian due to Abels, Hurm \cite{AbelsHurm}.\\
	
	\noindent\textit{2020 Mathematics Subject Classification:} Primary 35K57; Secondary 35B40, 35K61, 35Q92.\\
	\textit{Keywords:} tumor growth; non-local and local Cahn-Hilliard equation; nonlocal to local convergence.
\end{abstract}

\pdfbookmark[1]{Table of Contents}{toc}
\tableofcontents

\section{Introduction}\label{sec_intro}
Let $\Omega\subseteq\R^n$, $n\in\{2,3\}$, be a bounded domain with $C^3$-boundary, $T>0$ be fixed and $\Omega_T:=\Omega\times(0,T)$ as well as $\partial\Omega_T:=\partial\Omega\times(0,T)$. We consider the following local Cahn-Hilliard model for tumor growth,
\begin{alignat}{2}
    \partial_t\varphi &= \Delta\mu + (\Pc\sigma - \Ac)h(\varphi) & \quad &\text{ in }\Omega_T, \label{eq_Loc1} \\
    \mu &= -\Delta\varphi + \Psi^\prime(\varphi)& \quad &\text{ in }\Omega_T, \label{eq_Loc2} \\
    \partial_t\sigma &= \Delta\sigma + \Bc(\sigma_S - \sigma) - \Cc\sigma h(\varphi)& \quad &\text{ in }\Omega_T,\label{eq_Loc3}\\
    \partial_{\mathbf{n}}\varphi &= \partial_{\mathbf{n}}\mu = \partial_{\mathbf{n}}\sigma = 0 &\quad &\text{ on }\partial\Omega_T , \label{eq_Loc4} \\
	\varphi(0) &= \varphi_0,\quad \sigma(0) = \sigma_0 & \quad &\text{ in }\Omega. \label{eq_Loc5}
\end{alignat}
Here, $\varphi:\Omega_T\rightarrow\R$ is an order parameter distinguishing the healthy tissue and tumor phase, $\mu:\Omega_T\rightarrow\R$ is the chemical potential and $\sigma:\Omega_T\rightarrow\R$ the nutrient concentration. Moreover, $\Psi:\R\rightarrow\R$ is a double well potential with wells of equal depth and minima at $\pm 1$ and $\Pc,\Ac,\Bc,\Cc\geq 0$ are constants representing tumor proliferation rate, tumor apoptosis rate, nutrient supply rate and nutrient consumption rate, respectively. Additionally, $h:\R\rightarrow[0,1]$ is an interpolation function only present in the tumor phase and the function $\sigma_S:\Omega_T\rightarrow\R$ is a preexisting nutrient concentration. Finally, $\partial_{\mathbf{n}}$ is the derivative in normal direction.

The system \eqref{eq_Loc1}-\eqref{eq_Loc5} is a special case of the models derived in Garcke, Lam, Sitka, Styles \cite{GarckeLamSitkaStyles}, in particular neglecting chemotaxis and active transport. Moreover, the system in Garcke, Lam, Rocca \cite{GarckeLamRocca} reduces to our model when setting the control to zero there and yields uniqueness and existence of strong solutions, cf.~Theorem \ref{th_WP_Loc} below. The interested reader may also consider the references in \cite{GarckeLamSitkaStyles, GarckeLamRocca} for other Cahn-Hilliard-type models for tumor growth, for example the Cahn-Hilliard-Darcy variant and optimal control problems. Let us just mention the results in \cite{Colli1, Colli2, Colli3, Frigeri, GarckeLam1, GarckeLam2} for similar systems as \eqref{eq_Loc1}-\eqref{eq_Loc5}.

Next, for $\varepsilon>0$ small we consider the nonlocal Cahn-Hilliard model for tumor growth,
\begin{alignat}{2}
    \partial_t\varphi_\varepsilon &= \Delta\mu_\varepsilon + (\Pc\sigma_\varepsilon - \Ac)h(\varphi_\varepsilon) & \quad &\text{ in }\Omega_T, \label{eq_NonLoc1} \\
    \mu_\varepsilon &= \mathcal{L}_\varepsilon\varphi_\varepsilon + \Psi^\prime(\varphi_\varepsilon) &\quad &\text{ in }\Omega_T, \label{eq_NonLoc2} \\
    \partial_t\sigma_\varepsilon &= \Delta\sigma_\varepsilon + \Bc(\sigma_S - \sigma_\varepsilon) - \Cc\sigma_\varepsilon h(\varphi_\varepsilon) & \quad &\text{ in }\Omega_T,\label{eq_NonLoc3}\\
	\partial_{\mathbf{n}}\mu_\varepsilon &= \partial_{\mathbf{n}}\sigma_\varepsilon = 0 & \quad &\text{ on }\partial\Omega_T, \label{eq_NonLoc4} \\
	\varphi_\varepsilon(0) &= \varphi_{0,\varepsilon},\quad \sigma_\varepsilon(0) = \sigma_{0,\varepsilon} & \quad & \text{ in }\Omega. \label{eq_NonLoc5}
	\end{alignat}
Here, the interpretation of the functions $\varphi_\varepsilon, \mu_\varepsilon, \sigma_\varepsilon$ and $\Psi, h, \sigma_S$ as well as the constants $\Pc,\Ac,\Bc,\Cc$ is analogous to the local model \eqref{eq_Loc1}-\eqref{eq_Loc5} above. Moreover, for $\varepsilon>0$ the nonlocal operator $\mathcal{L}_\varepsilon$ is defined by the following convolution integral,
	\begin{align}\label{eq_nonlocal_operator}
		\mathcal{L}_\varepsilon \psi(x) := \int_{\Omega}J_\varepsilon(x-y)\left(\psi(x)-\psi(y)\right)\,dy\quad\text{for all }x\in\Omega,
	\end{align}
 for integrable $\psi:\Omega\rightarrow\R$ and suitable convolution kernels $J_\varepsilon:\R^n\rightarrow\R$ such that $\mathcal{L}_\varepsilon$ approximates $-\Delta$ as 
 $\varepsilon\rightarrow 0$, cf.~Theorem \ref{th_AbelsHurm} below. In this setting, also the variational convergence 
 \begin{align*}
		\mathcal{E}_\varepsilon(\psi) := \frac{1}{4}\int_{\Omega}\int_{\Omega}J_\varepsilon(x-y)\big|\psi(x) - \psi(y)\big|^2\,dy\,dx \rightarrow \frac{1}{2}\int_\Omega|\nabla\psi|^2\,dx
	\end{align*}
as $\varepsilon\rightarrow0$ for all $\psi\in H^1(\Omega)$ is well-known, cf.~the results by Ponce \cite{Ponce1, Ponce2}. 

The nonlocal system \eqref{eq_NonLoc1}-\eqref{eq_NonLoc5} was introduced in Scarpa, Signori \cite{ScarpaSignori}, where the authors considered a more general model with chemotaxis and active transport as well as relaxation parameters. Their paper yields an existence and uniqueness result for weak solutions of \eqref{eq_NonLoc1}-\eqref{eq_NonLoc5}, cf.~Theorem \ref{th_Ex_Nonloc} below. 
The motivation to replace $-\Delta$ in \eqref{eq_Loc2} by $\Lc_\varepsilon$ in order to obtain \eqref{eq_NonLoc2} is to take into account long-range interactions, cf.~also \cite{ScarpaSignori}. Here, note that the nonlocal system \eqref{eq_NonLoc1}-\eqref{eq_NonLoc5} is of second order compared to the fourth order system \eqref{eq_Loc1}-\eqref{eq_Loc5}, hence there is no boundary condition for $\varphi_\varepsilon$ in \eqref{eq_NonLoc4}. For references in the direction of nonlocal Cahn-Hilliard-type models we refer to the references in \cite{ScarpaSignori} and Davoli et al.~\cite{DavoliRoccaScarpaTrussardi}.

 In this contribution, we apply the results in Abels, Hurm \cite{AbelsHurm}, in order to prove convergence of the weak solution of \eqref{eq_NonLoc1}-\eqref{eq_NonLoc5} to the strong solution of \eqref{eq_Loc1}-\eqref{eq_Loc5} for $\varepsilon\rightarrow 0$ together with rates of convergence with respect to $\varepsilon$. In the literature there are serveral results for nonlocal-to-local convergence. 
 For example, the convergence (without rates) of weak solutions of the nonlocal Cahn--Hilliard equation, i.e.~\eqref{eq_NonLoc1}--\eqref{eq_NonLoc5} with $\Pc,\Ac,\Bc,\Cc = 0$, to the solution of the local Cahn--Hilliard equation, i.e.~\eqref{eq_Loc1}--\eqref{eq_Loc5} with $\Pc,\Ac,\Bc,\Cc = 0$, has already been shown in various settings such as periodic boundary conditions \cite{MelchionnaRanetbauerScarpaTrussardi, DavoliRanetbauerScarpaTrussardi}, 
  Neumann boundary conditions \cite{DavoliScarpaTrussardi1, DavoliScarpaTrussardi} and degenerate mobility \cite{Elbar}. Recently, Davoli et al.~\cite{DavoliRoccaScarpaTrussardi} proved the nonlocal-to-local limit for a viscous Cahn--Hilliard model for tumor growth. The authors in \cite{AbelsHurm} also derived precise rates of convergence.

The structure of this work is as follows. In Section \ref{sec_Prelim}, we recall some preliminaries and Section \ref{sec_proof} contains the main result.

\section{Preliminaries}\label{sec_Prelim}
\subsection{Notation}
Let $\Omega\subseteq\R^n$, $n\in\N$, be open, $1\leq p \leq \infty$ and $k\in\N$. We use the notation $L^p(\Omega)$ and $W^{k,p}(\Omega)$ for the standard Lebesgue and Sobolev spaces on $\Omega$. The corresponding norms are denoted by $\|.\|_{L^p(\Omega)}$ and $\|.\|_{W^{k,p}(\Omega)}$, respectively. We also write $H^k(\Omega) := W^{k,2}(\Omega)$.

For any Banach space $X$ over $\K=\R$ or $\C$, we use the notation $X^\prime$ for its dual space. The corresponding dual pairing is denoted by $\langle .,.\rangle_{X}:X' \times X \rightarrow \K$.

Moreover, we recall that the inverse of the negative Laplacian $-\Delta_N$ with Neumann boundary condition is a well-defined isomorphism
	\begin{align}\label{eq_laplace_inverse}
		(-\Delta_N)^{-1} : \{c\in H^1(\Omega)^\prime: c_\Omega=0\} \rightarrow \{c\in H^1(\Omega): c_\Omega=0\}, 
	\end{align}
	where $c_\Omega := \frac{1}{|\Omega|}\langle c,1\rangle_{H^1(\Omega)}$ with $|\Omega|$ denoting the $n$-dimensional Lebesgue measure of $\Omega$.
\subsection{Assumptions}\label{sec_assumptions}
We make the following general assumptions.
\begin{enumerate}[label=\textnormal{(A.\arabic*)}]
    \item \label{ass_1} Let $\Omega\subseteq\mathbb{R}^n$, $n\in\{2,3\}$, be a bounded domain with $C^3$-boundary. Moreover, let $T>0$ be fixed and $\Omega_T:=\Omega\times(0,T)$ as well as $\partial\Omega_T:=\partial\Omega\times(0,T)$.
    \item \label{ass_2} 
    Let  $J_\varepsilon: \mathbb{R}^n\rightarrow[0,\infty)$ be a non-negative function given by $J_\varepsilon(x) = \frac{\rho_\varepsilon(|x|)}{|x|^2}$ for all $x\in\mathbb{R}^n$ and $J_\varepsilon\in W^{1,1}(\mathbb{R}^n)$, where $(\rho_\varepsilon)_{\varepsilon>0}$ is a family of mollifiers satisfying
	\begin{align*}
		&\rho_\varepsilon: \mathbb{R}\rightarrow [0,\infty),\quad\rho_\varepsilon\in L^1(\mathbb{R}),\quad\rho_\varepsilon(r)=\rho_\varepsilon(-r)\quad\text{for all }r\in\mathbb{R}, \,\varepsilon>0, \\
        &\rho_\varepsilon(r)=\varepsilon^{-n} \rho_1\left(\frac{r}{\varepsilon}\right)\quad\text{ for all }r\in\R,\\
		&\int_{0}^\infty\rho_\varepsilon(r)\:r^{n-1}\,dr = \frac{2}{C_n}\quad\text{for all }\varepsilon>0, \\
		&\lim\limits_{\varepsilon\searrow 0}\int_{\delta}^\infty\rho_\varepsilon(r)\:r^{n-1}\,dr = 0\quad\text{for all }\delta>0,
	\end{align*}
	where $C_n := \int_{\mathbb{S}^{n-1}}|e_1\cdot\sigma|^2\,d\mathcal{H}^{n-1}(\sigma)$.
 \item \label{ass_3} $\Pc, \Ac, \Bc,\Cc$ are non-negative constants, $\sigma_S\in L^\infty(\Omega_T)$ and $0\leq \sigma_S\leq 1$ a.e.~in $\Omega_T$.
 \item \label{ass_4} The function $h:\mathbb{R}\rightarrow[0,1]$ is of class $C^2$, bounded and Lipschitz continuous.
 \item \label{ass_5} The potential $\Psi:\mathbb{R}\rightarrow[0,\infty)$ is of class $C^3$ and satisfies
			\begin{align}
				|\Psi^\prime(s)| &\leq k_0\Psi(s) + k_1, \label{eq_psi1}\\ 
				\Psi(s) &\geq k_2|s| - k_3, \label{eq_psi2}\\ 
				|\Psi^{\prime\prime}(s)| &\leq k_4(1+|s|^2), \label{eq_psi3}\\ 
				|\Psi^\prime(s) - \Psi^\prime(t)| &\leq k_5(1+|s|^2 + |t|^2)|s-t| \label{eq_psi4}
			\end{align}
			for all $s,t\in\mathbb{R}$ and some positive constants $k_i$, $i=0,\ldots,5$.
\item \label{ass_6} 
There are constants $C_1,C_2,C_3>0$ such that for all $s\in\R$
\[
\Psi(s)\geq C_1|s|^4 -C_2 \quad\text{ and }\quad \Psi'' (s) \geq -C_3.
\] 
\end{enumerate}

\begin{Remark} \upshape
A typical example for a potential satisfying the assumptions \ref{ass_5}-\ref{ass_6} is the classical double-well potential $\Psi(s) := \frac{1}{4}(1-s^2)^2$ for all $s\in\mathbb{R}$.
\end{Remark}
\subsection{Inequalities}

\begin{Lemma}\label{th_DavoliInequ}
 	      For every $\delta>0$, there exist constants $C_\delta>0$ and $\varepsilon_\delta>0$ such that for every sequence $(f_\varepsilon)_{\varepsilon>0}\subset L^2(\Omega)$ there holds
 			\begin{align}\label{eq_ineqDav}
 				\|f_{\varepsilon_1} - f_{\varepsilon_2}\|_{L^2(\Omega)}^2 \leq \delta\mathcal{E}_{\varepsilon_1}(f_{\varepsilon_1}) + \delta\mathcal{E}_{\varepsilon_2}(f_{\varepsilon_2}) + C_\delta\|f_{\varepsilon_1} - f_{\varepsilon_2}\|_{H^1(\Omega)^\prime}^2
 			\end{align}
 		for all $\varepsilon_1,\varepsilon_2\in(0,\varepsilon_\delta)$.
 	\end{Lemma}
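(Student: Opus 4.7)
The plan is an Ehrling/compactness contradiction argument, built on the nonlocal Poincaré inequality and the compactness theorem for sequences with uniformly bounded energies $\mathcal{E}_\varepsilon$ due to Ponce \cite{Ponce1, Ponce2}. The main obstacle is that the hypothesis only controls the difference $f_{\varepsilon_1} - f_{\varepsilon_2}$ in $L^2$, while the individual functions $f_{\varepsilon_i}$ admit no \emph{a priori} $L^2$-bound; I will resolve this by splitting off their means.

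Suppose the inequality fails for some $\delta_0 > 0$. Then for each $k \in \N$ there exist $\varepsilon_1^k, \varepsilon_2^k \in (0, 1/k)$ and $f_k^1, f_k^2 \in L^2(\Omega)$ with
\[
\|f_k^1 - f_k^2\|_{L^2(\Omega)}^2 > \delta_0\bigl(\mathcal{E}_{\varepsilon_1^k}(f_k^1) + \mathcal{E}_{\varepsilon_2^k}(f_k^2)\bigr) + k\|f_k^1 - f_k^2\|_{H^1(\Omega)'}^2.
\]
Exploiting the quadratic homogeneity of all terms, I normalize so that $\|f_k^1 - f_k^2\|_{L^2(\Omega)} = 1$, which yields $\mathcal{E}_{\varepsilon_i^k}(f_k^i) \leq 1/\delta_0$ uniformly and $\|f_k^1 - f_k^2\|_{H^1(\Omega)'} \to 0$.

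To restore compactness for the individual functions, I decompose $f_k^i = m_k^i + g_k^i$ with $m_k^i := (f_k^i)_\Omega \in \R$ and $g_k^i$ of zero mean. The nonlocal Poincaré inequality of Ponce, which is uniform in $\varepsilon$ small, yields $\|g_k^i\|_{L^2(\Omega)}^2 \leq C \mathcal{E}_{\varepsilon_i^k}(g_k^i) = C \mathcal{E}_{\varepsilon_i^k}(f_k^i) \leq C/\delta_0$, and the triangle inequality $|\Omega|^{1/2}|m_k^1 - m_k^2| \leq \|f_k^1 - f_k^2\|_{L^2(\Omega)} + \|g_k^1\|_{L^2(\Omega)} + \|g_k^2\|_{L^2(\Omega)}$ then bounds the difference of means. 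Ponce's compactness theorem produces, along a subsequence, limits $g^i \in H^1(\Omega)$ with zero mean and $g_k^i \to g^i$ in $L^2(\Omega)$, together with $m_k^1 - m_k^2 \to m \in \R$.

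Passing to the limit yields $f_k^1 - f_k^2 \to m + g^1 - g^2$ strongly in $L^2(\Omega)$, so $\|m + g^1 - g^2\|_{L^2(\Omega)} = 1$. On the other hand, $f_k^1 - f_k^2 \to 0$ in $H^1(\Omega)'$, so the limit must vanish identically. Testing $m + g^1 - g^2 = 0$ against the constant $1$ and using $\int_\Omega g^i = 0$ gives $m = 0$ and hence $g^1 = g^2$, contradicting $\|m + g^1 - g^2\|_{L^2(\Omega)} = 1$. This completes the argument.
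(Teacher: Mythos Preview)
Your argument is correct. The paper does not supply its own proof of this lemma; it simply refers to \cite[Lemma~4(2)]{DavoliRanetbauerScarpaTrussardi}. Your contradiction argument---normalization, splitting off the mean, invoking Ponce's nonlocal Poincar\'e inequality for the $L^2$-bound on the mean-free parts, and Ponce's compactness for the strong $L^2$-limit---is precisely the strategy used in that cited reference, so you have essentially reconstructed the intended proof. One cosmetic remark: once you have established that the $L^2$-limit $m + g^1 - g^2$ coincides with the $H^1(\Omega)'$-limit $0$, the contradiction with $\|m + g^1 - g^2\|_{L^2(\Omega)} = 1$ is immediate; the additional step of testing against $1$ to separate $m$ from $g^1 - g^2$ is not needed.
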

  \begin{proof}
      For a proof, we refer to \cite[Lemma 4(2)]{DavoliRanetbauerScarpaTrussardi}.
  \end{proof}
  
	\begin{Theorem}\label{th_AbelsHurm}
		Let $\Omega\subset\mathbb{R}^n$, $n\in\{2,3\}$, be a bounded domain with $C^3$-boundary. Moreover, let $\Lc_\varepsilon$ be defined as in \eqref{eq_nonlocal_operator} and $J_\varepsilon$ satisfy \ref{ass_2} from Section \ref{sec_assumptions}. Then for all $c\in H^3(\Omega)$ with $\partial_{\mathbf{n}}c = 0$ on $\partial\Omega$ it holds for a constant $K>0$ independent of $\varepsilon$
		\begin{align}\label{eq_AbelsHurm}
			\Big\|\mathcal{L}_\varepsilon c + \Delta c\Big\|_{L^2(\Omega)}\leq K\sqrt{\varepsilon}\|c\|_{H^3(\Omega)}. 
		\end{align}
	\end{Theorem}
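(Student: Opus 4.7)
The plan is to decompose $\Omega$ into an interior region $\Omega\setminus L_\varepsilon$ and a boundary layer $L_\varepsilon := \{x\in\Omega : \mathrm{dist}(x,\partial\Omega)<R\varepsilon\}$ of width $O(\varepsilon)$, where $R$ is chosen so that $J_\varepsilon$ is effectively supported in $B_{R\varepsilon}(0)$, and to estimate $\mathcal{L}_\varepsilon c + \Delta c$ separately on the two pieces after extending $c$ to an $H^3(\R^n)$-function (possible since $\partial\Omega\in C^3$). On the interior region the substitution $h = x-y$ turns the integration range into a symmetric neighbourhood of the origin, so a second-order Taylor expansion of $c$ at $x$ applies: the first-order term vanishes by the evenness $J_\varepsilon(-h) = J_\varepsilon(h)$, and the quadratic term reduces via isotropy to a multiple of $\Delta c(x)\int J_\varepsilon(h)|h|^2\,dh$ which, thanks to the normalisation $\int_0^\infty \rho_\varepsilon(r)r^{n-1}dr = 2/C_n$ together with the elementary identity $C_n = |S^{n-1}|/n$, collapses to exactly $-\Delta c(x)$. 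The cubic Taylor remainder is then controlled by the moment $\int J_\varepsilon(h)|h|^3\,dh = |S^{n-1}|\int_0^\infty \rho_\varepsilon(r) r^n\,dr = O(\varepsilon)$, which follows from the scaling of $\rho_\varepsilon$ in \ref{ass_2}; Minkowski's inequality then produces an $L^2(\Omega\setminus L_\varepsilon)$-contribution of order $\varepsilon\|c\|_{H^3(\Omega)}$.

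For $x \in L_\varepsilon$ the shifted domain $\Omega - x$ is no longer symmetric around $0$ and the first-order Taylor term fails to cancel. Here the Neumann condition $\partial_{\mathbf{n}}c = 0$ is crucial: in a tubular neighbourhood of $\partial\Omega$ one extends $c$ by even reflection across the boundary in normal coordinates (constructed from the $C^3$-regularity) to obtain an extension $\tilde c$ that is $C^1$ across $\partial\Omega$ and preserves the $H^3$-norm up to a constant. Decomposing
\[
\mathcal{L}_\varepsilon c(x) = \int_{\R^n} J_\varepsilon(x-y)(\tilde c(x) - \tilde c(y))\,dy - \int_{\R^n\setminus\Omega} J_\varepsilon(x-y)(\tilde c(x) - \tilde c(y))\,dy,
\]
the first summand is handled by the interior argument applied to $\tilde c$ and yields $-\Delta c(x) + O(\varepsilon)\|\tilde c\|_{H^3}$. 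For the correction integral over $\R^n\setminus\Omega$, pairing each $y$ with its reflection $y^\ast\in\Omega$ and using $\tilde c(y) = \tilde c(y^\ast)$ together with the $C^1$-matching across $\partial\Omega$ causes the leading first-order piece to cancel, leaving only terms involving $D^2 c$ integrated against integrable $J_\varepsilon$-moments. Combined with the measure estimate $|L_\varepsilon| = O(\varepsilon)$, this yields an $L^2(L_\varepsilon)$-contribution of order $\sqrt{\varepsilon}\|c\|_{H^3(\Omega)}$, which dominates the global rate in \eqref{eq_AbelsHurm}.

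The principal difficulty is the boundary correction integral. A naive bound via $|\tilde c(x) - \tilde c(y)| \leq C\|\nabla c\|_\infty |x-y|$ is useless because $\int J_\varepsilon(h)|h|\,dh$ diverges as $\varepsilon \to 0$ (only the $|h|^2$-moment is normalised in \ref{ass_2}). It is precisely the Neumann condition, transmitted through the even reflection, that cancels this would-be divergent first-order contribution; moreover, the $C^3$-regularity of $\partial\Omega$ is used both to flatten the boundary cleanly in normal coordinates and to ensure that the geometric error terms arising from the curvature of $\partial\Omega$ remain controllable at the required $\sqrt{\varepsilon}$-level.
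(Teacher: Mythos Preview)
The paper does not prove this statement at all: its entire proof reads ``A proof can be found in \cite[Corollary 4.2]{AbelsHurm}.'' So there is no argument in the paper to compare against; the theorem is quoted as an external black box from Abels--Hurm.

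Your sketch is a reasonable outline of the mechanism behind such a result and correctly identifies the two key ingredients: the interior Taylor expansion (where evenness kills the first-order term and the normalisation in \ref{ass_2} turns the second-order term into exactly $-\Delta c$) and the role of the Neumann condition in the boundary layer. A couple of points would need tightening before this becomes a proof. First, \ref{ass_2} does not assume that $\rho_1$ has compact support, so the phrase ``choose $R$ so that $J_\varepsilon$ is effectively supported in $B_{R\varepsilon}(0)$'' hides a tail estimate that must be made explicit. Second, the boundary-layer step is the delicate one and your description is somewhat schematic: the claim that the even reflection in normal coordinates ``preserves the $H^3$-norm up to a constant'' requires care, since the normal-coordinate chart for a $C^3$ boundary is only $C^2$, and the precise way in which pairing $y\leftrightarrow y^\ast$ cancels the divergent first-order moment (and what exactly survives at order $|h|^2$) deserves a more explicit computation before one can read off the $\sqrt{\varepsilon}$ rate. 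None of this is fatal---the ideas are the right ones---but these are genuine details, and the cited paper \cite{AbelsHurm} presumably handles them carefully.
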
 
 \begin{proof}
     A proof can be found in \cite[Corollary 4.2]{AbelsHurm}.
 \end{proof}
\subsection{Existence and Uniqueness Results}
Well-posedness of the local Cahn-Hilliard model \eqref{eq_Loc1}-\eqref{eq_Loc5} is available in a slightly more general setting due to \cite{GarckeLamRocca}, where the authors considered the problem with an additional control function. 
In particular, we have the following well-posedness result for the system \eqref{eq_Loc1}--\eqref{eq_Loc5}.

\begin{Theorem}[Well-posedness of the local model]
\label{th_WP_Loc}
 Let the assumptions \ref{ass_1}, \ref{ass_3}-\ref{ass_5} hold and let $n=3$. Let the initial data $(\varphi_0,\sigma_0)$ satisfy $\varphi_0\in H^3(\Omega)$ with $\partial_\mathbf{n}\varphi_0 = 0$ on $\partial\Omega$ and $\sigma_0\in H^1(\Omega)$ with $0\leq \sigma_0 \leq 1$ a.e.~in $\Omega$.
 
 Then there is a unique solution $(\varphi, \mu, \sigma)$ of \eqref{eq_Loc1}-\eqref{eq_Loc5} with the regularity
 \begin{align*}
     \varphi &\in L^\infty(0,T,H^2(\Omega))\cap L^2(0,T,H^3(\Omega))\cap H^1(0,T,L^2(\Omega))\cap C^0(\overline{\Omega_T}),\\
     \mu&\in L^2(0,T,H^2(\Omega))\cap L^\infty(0,T,L^2(\Omega)),\\
     \sigma&\in L^\infty(0,T,H^1(\Omega))\cap L^2(0,T,H^2(\Omega))\cap H^1(0,T,L^2(\Omega)),\quad 0\leq\sigma\leq 1\text{ a.e.~in }\Omega_T,
 \end{align*}
 such that $\varphi(0) = \varphi_0$, $\sigma(0) = \sigma_0$ and for a.e.~$t\in (0,T)$ and all $\xi\in H^1(\Omega)$ it holds 
\begin{align}
    0 &= \int_{\Omega} \partial_t\varphi\xi + \nabla\mu\cdot\nabla\xi - (\Pc\sigma - \Ac)h(\varphi)\xi\,dx, \label{eq_WP_loc1}\\
    0 &= \int_{\Omega} \mu\xi - \Psi^\prime(\varphi)\xi - \nabla\varphi\cdot\nabla\xi\,dx, \label{eq_WP_loc2}\\
    0 &= \int_{\Omega}\partial_t\sigma\xi + \nabla\sigma\cdot\nabla\xi + \Cc h(\varphi)\xi + \Bc(\sigma- \sigma_S)\xi\, dx. \label{eq_WP_loc3}
\end{align}
\end{Theorem}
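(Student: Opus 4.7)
Since \eqref{eq_Loc1}--\eqref{eq_Loc5} is precisely the zero-control case of the problem treated in \cite{GarckeLamRocca}, the most economical plan is to invoke their well-posedness theorem after verifying that \ref{ass_1}, \ref{ass_3}--\ref{ass_5} imply the hypotheses imposed there. To indicate how one would reproduce the argument, I would use a Faedo--Galerkin scheme in the basis of eigenfunctions of the Neumann Laplacian: the resulting ODE is locally well-posed by Cauchy--Lipschitz thanks to the $C^2$-regularity of $h$ and $\Psi'$, globalizes on $[0,T]$ through the uniform a priori bounds sketched below, and the limit is extracted via Aubin--Lions compactness.

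Three layers of estimates do the work. First, testing \eqref{eq_Loc1} by $\mu$ and \eqref{eq_Loc2} by $\partial_t\varphi$ and subtracting yields the energy identity
\[
\tfrac{d}{dt}\int_\Omega\Big(\tfrac12|\nabla\varphi|^2 + \Psi(\varphi)\Big)dx + \int_\Omega|\nabla\mu|^2\,dx = \int_\Omega(\Pc\sigma-\Ac)h(\varphi)\mu\,dx,
\]
whose right-hand side is absorbed using $h$ bounded, Poincaré--Wirtinger applied to $\mu-\mu_\Omega$, and control of $\mu_\Omega$ obtained by testing \eqref{eq_Loc2} with $1$ and invoking \eqref{eq_psi1}. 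This delivers $\varphi\in L^\infty(0,T;H^1)$ and $\mu\in L^2(0,T;H^1)$. Second, a Stampacchia truncation in \eqref{eq_Loc3}, using $h\geq 0$ and $0\leq\sigma_S\leq 1$, gives the pointwise bound $0\leq\sigma\leq 1$; since \eqref{eq_Loc3} is then linear in $\sigma$ with bounded coefficients, standard parabolic estimates yield $\sigma\in L^\infty(0,T;H^1)\cap L^2(0,T;H^2)\cap H^1(0,T;L^2)$. Third, to upgrade $\varphi$: \ref{ass_6} provides $\Psi''\geq -C_3$, which is exactly what is needed to test \eqref{eq_Loc2} by $-\Delta\varphi$ and obtain $\varphi\in L^\infty(0,T;H^2)$. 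Substituting this back into \eqref{eq_Loc1} and testing with $-\Delta\mu$ gives $\mu\in L^2(0,T;H^2)$ and $\partial_t\varphi\in L^2(0,T;L^2)$, after which elliptic regularity on \eqref{eq_Loc2} produces $\varphi\in L^2(0,T;H^3)$. The embedding $H^2\hookrightarrow C^0$ in $n=3$ together with Aubin--Lions then yields $\varphi\in C^0(\overline{\Omega_T})$.

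Uniqueness is a Gronwall argument on the differences $(\varphi,\mu,\sigma):=(\varphi_1-\varphi_2,\mu_1-\mu_2,\sigma_1-\sigma_2)$. Testing the $\sigma$-difference equation with $\sigma$ and using Lipschitz continuity of $h$ gives an $L^2$ estimate directly. For the Cahn--Hilliard pair, I would combine \eqref{eq_Loc1}--\eqref{eq_Loc2} for each solution, test the resulting difference with $(-\Delta_N)^{-1}(\varphi-\varphi_\Omega)$, and separately control the mean $\varphi_\Omega$ via the ODE obtained by integrating the $\varphi$-difference equation over $\Omega$. The main obstacle is the term generated by $\Psi'(\varphi_1)-\Psi'(\varphi_2)$: the cubic growth in \eqref{eq_psi4} in three dimensions can be absorbed only by invoking the $L^\infty(\Omega_T)$ bound on each $\varphi_i$ coming from $\varphi_i\in L^\infty(0,T;H^2)\hookrightarrow L^\infty(\Omega_T)$. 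Once this is available, Gronwall closes the estimate in $\|\varphi-\varphi_\Omega\|_{H^1(\Omega)'}^2+|\varphi_\Omega|^2+\|\sigma\|_{L^2(\Omega)}^2$, completing uniqueness.
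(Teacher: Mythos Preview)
Your primary plan---recognizing \eqref{eq_Loc1}--\eqref{eq_Loc5} as the zero-control case of \cite{GarckeLamRocca} and invoking their well-posedness theorem---is exactly what the paper does; its entire proof is the one-line reference to \cite[Theorem~2.1]{GarckeLamRocca}. One small caveat in your optional sketch: you appeal to \ref{ass_6} for the lower bound $\Psi''\geq -C_3$, but the present theorem assumes only \ref{ass_1}, \ref{ass_3}--\ref{ass_5}, so that step would have to be handled instead via the two-sided growth bound \eqref{eq_psi3} together with Gagliardo--Nirenberg interpolation rather than a uniform constant.
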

\begin{proof}
    We refer to \cite[Theorem 2.1]{GarckeLamRocca}.
\end{proof}

Existence of weak solutions of the nonlocal system \eqref{eq_NonLoc1}--\eqref{eq_NonLoc5} has already been shown in \cite{ScarpaSignori}, where the authors considered a more general Cahn-Hilliard system including chemotaxis and active transport, as well as possible relaxation terms for the Cahn-Hilliard equation.
The following result is obtained:

\begin{Theorem}[Well-posedness of the non-local model]
\label{th_Ex_Nonloc}
    Let the assumptions \ref{ass_1}-\ref{ass_6} hold and let $n=3$. Moreover, we assume that $\varphi_{0,\varepsilon}, \sigma_{0,\varepsilon}\in L^2(\Omega)$. Then for $\varepsilon_0>0$ small and all $\varepsilon\in(0,\varepsilon_0]$ there exists a unique weak solution $(\varphi_\varepsilon,\mu_\varepsilon,\sigma_\varepsilon)$ of \eqref{eq_NonLoc1}-\eqref{eq_NonLoc5} with the regularity
    \begin{align*}
        \varphi_\varepsilon& \in H^1(0,T,H^1(\Omega)')\cap L^2(0,T,H^1(\Omega)),\\
        \mu_\varepsilon &\in L^2(0,T,H^1(\Omega)),\\
        \sigma_\varepsilon & \in H^1(0,T,H^1(\Omega)')\cap L^2(0,T,H^1(\Omega)), \quad 0\leq\sigma_\varepsilon(t)\leq 1\text{ a.e.~in }\Omega,\text{ for all }t\in[0,T],
    \end{align*}
    such that $\varphi_\varepsilon(0)=\varphi_{0,\varepsilon}, \sigma_\varepsilon(0)=\sigma_{0,\varepsilon}$ and for all $\xi\in H^1(\Omega)$ and a.e.~$t\in(0,T)$ it holds
    \begin{align}
        0&=\langle\partial_t\varphi_\varepsilon,\xi\rangle_{H^1(\Omega)} + \int_\Omega\nabla\mu_\varepsilon\cdot\nabla\xi\,dx - \int_\Omega(\Pc\sigma_\varepsilon-\Ac)h(\varphi_\varepsilon)\xi\,dx,\label{eq_WP_Nonloc1}\\
        \mu_\varepsilon&= \Lc_\varepsilon\varphi_\varepsilon+ \Psi'(\varphi_\varepsilon),\label{eq_WP_Nonloc2}\\
        0&= \langle\partial_t\sigma_\varepsilon,\xi\rangle_{H^1(\Omega)} + \int_\Omega \nabla\sigma_\varepsilon\cdot\nabla\xi + \Cc h(\varphi_\varepsilon)\xi + \Bc(\sigma_\varepsilon- \sigma_S)\xi\, dx. \label{eq_WP_Nonloc3}
    \end{align}
\end{Theorem}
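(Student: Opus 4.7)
The natural approach is a Faedo--Galerkin scheme based on the eigenfunctions $\{w_j\}_{j\in\N}$ of the Neumann Laplacian $-\Delta_N$, which form an orthonormal basis of $L^2(\Omega)$ and an orthogonal basis of $H^1(\Omega)$. For each $k\in\N$ we set $V_k:=\operatorname{span}\{w_1,\ldots,w_k\}$ and seek $\varphi_\varepsilon^k,\mu_\varepsilon^k,\sigma_\varepsilon^k\in V_k$ solving the Galerkin version of \eqref{eq_WP_Nonloc1}--\eqref{eq_WP_Nonloc3} with initial data obtained by $L^2$-projection of $\varphi_{0,\varepsilon},\sigma_{0,\varepsilon}$ onto $V_k$. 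Note that $\mu_\varepsilon^k$ is determined algebraically from $\varphi_\varepsilon^k$ after projection, and $\Lc_\varepsilon$ maps $L^2(\Omega)$ into itself continuously thanks to $J_\varepsilon\in W^{1,1}(\R^n)$, so local-in-time solvability of the resulting ODE system follows from Cauchy--Lipschitz once $\Psi', h$ are, say, truncated and the truncation is removed after passing to the limit.

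\textbf{A priori estimates.} The system is of gradient-flow type up to lower-order coupling. Testing \eqref{eq_WP_Nonloc1} with $\mu_\varepsilon^k$, using \eqref{eq_WP_Nonloc2} and the symmetry of $J_\varepsilon$, I would obtain the energy identity
\begin{equation*}
\frac{d}{dt}\Big(\Ec_\varepsilon(\varphi_\varepsilon^k)+\int_\Omega\Psi(\varphi_\varepsilon^k)\,dx\Big)+\|\nabla\mu_\varepsilon^k\|_{L^2(\Omega)}^2=\int_\Omega(\Pc\sigma_\varepsilon^k-\Ac)h(\varphi_\varepsilon^k)\mu_\varepsilon^k\,dx.
\end{equation*}
Maximum-principle arguments (or testing with $(\sigma_\varepsilon^k-1)_+$ and $-(\sigma_\varepsilon^k)_-$ in the continuous problem; in the Galerkin step, one first obtains an $L^\infty_tL^2_x$ bound from $\sigma_S\in L^\infty$, $h\in[0,1]$, and then recovers $0\le\sigma_\varepsilon\le 1$ in the limit) yield the nutrient bounds. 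The right-hand side is controlled using $h\in[0,1]$, the $\sigma_\varepsilon$ bound, and splitting $\mu_\varepsilon^k=(\mu_\varepsilon^k-(\mu_\varepsilon^k)_\Omega)+(\mu_\varepsilon^k)_\Omega$ with Poincaré. Because $\int_\Omega\Lc_\varepsilon\varphi_\varepsilon^k\,dx=0$ by symmetry, the mean reads $(\mu_\varepsilon^k)_\Omega=|\Omega|^{-1}\int_\Omega\Psi'(\varphi_\varepsilon^k)\,dx$, which by \eqref{eq_psi1} is controlled by $\int_\Omega\Psi(\varphi_\varepsilon^k)\,dx$. A Gronwall argument then delivers, uniformly in $k$ (for each fixed $\varepsilon\in(0,\varepsilon_0]$), bounds
\begin{equation*}
\varphi_\varepsilon^k\in L^\infty(0,T;L^2(\Omega)),\quad \Psi(\varphi_\varepsilon^k)\in L^\infty(0,T;L^1(\Omega)),\quad \mu_\varepsilon^k\in L^2(0,T;H^1(\Omega)),
\end{equation*}
and $\sigma_\varepsilon^k\in L^\infty(0,T;L^2(\Omega))\cap L^2(0,T;H^1(\Omega))$. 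Assumption \ref{ass_6} upgrades the $\varphi$-bound to $L^\infty(0,T;L^4(\Omega))$, and the nonlocal Poincaré-type inequality (implicit in the proofs of \cite{Ponce1,Ponce2} and used in Lemma \ref{th_DavoliInequ}) combined with $\Lc_\varepsilon:L^2\to L^2$ being continuous yields $\varphi_\varepsilon^k\in L^2(0,T;H^1(\Omega))$ after passing to the limit, using the coercivity hidden in $\Ec_\varepsilon$ together with $\varepsilon\le\varepsilon_0$. Finally, comparing with the equation gives $\partial_t\varphi_\varepsilon^k,\partial_t\sigma_\varepsilon^k\in L^2(0,T;H^1(\Omega)')$.

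\textbf{Passing to the limit.} The Aubin--Lions lemma then yields (along a subsequence) strong convergence $\varphi_\varepsilon^k\to\varphi_\varepsilon$ in $L^2(0,T;L^2(\Omega))$ and pointwise a.e., whence by \eqref{eq_psi4} and the $L^\infty_tL^4_x$ bound, $\Psi'(\varphi_\varepsilon^k)\to\Psi'(\varphi_\varepsilon)$ in, e.g., $L^2(\Omega_T)$ via Vitali. Linearity allows one to pass directly to the limit in $\Lc_\varepsilon\varphi_\varepsilon^k$. The coupling terms $(\Pc\sigma_\varepsilon^k-\Ac)h(\varphi_\varepsilon^k)$ and $\Cc\sigma_\varepsilon^k h(\varphi_\varepsilon^k)$ are handled by the Lipschitz continuity of $h$ from \ref{ass_4} together with the strong convergences. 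This produces a weak solution with the claimed regularity.

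\textbf{Uniqueness.} Given two solutions with the same data, I would subtract the systems and test the $\varphi$-difference $\tilde\varphi:=\varphi_\varepsilon^{(1)}-\varphi_\varepsilon^{(2)}$ against $(-\Delta_N)^{-1}\tilde\varphi$ (after checking the mean of $\tilde\varphi$ is zero, which follows from the equation), the $\mu$-difference against $\tilde\varphi$ itself, and the $\sigma$-difference against $\tilde\sigma:=\sigma_\varepsilon^{(1)}-\sigma_\varepsilon^{(2)}$. The critical ingredient is $\Psi''(s)\ge-C_3$ from \ref{ass_6}: it gives
\begin{equation*}
\int_\Omega(\Psi'(\varphi_\varepsilon^{(1)})-\Psi'(\varphi_\varepsilon^{(2)}))\tilde\varphi\,dx\ge -C_3\|\tilde\varphi\|_{L^2(\Omega)}^2,
\end{equation*}
so that the cubic growth in \eqref{eq_psi4} is sidestepped. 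The term $\int_\Omega(\Lc_\varepsilon\tilde\varphi)\tilde\varphi\,dx=2\Ec_\varepsilon(\tilde\varphi)\ge0$ is dissipative. Combining with Lipschitz bounds on $h$ and the $L^\infty$ control on $\sigma_\varepsilon^{(i)}$, a Gronwall argument in the norm $\|\tilde\varphi\|_{H^1(\Omega)'}^2+\|\tilde\sigma\|_{L^2(\Omega)}^2$ closes.

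\textbf{Main obstacle.} The nonlocal operator $\Lc_\varepsilon$ does \emph{not} by itself control $\|\nabla\varphi_\varepsilon\|_{L^2}$, only a nonlocal seminorm $\Ec_\varepsilon(\varphi_\varepsilon)$, so one cannot obtain $\varphi_\varepsilon\in L^2(0,T;H^1(\Omega))$ as in the local case by a straightforward elliptic estimate; one must either invoke $J_\varepsilon\in W^{1,1}$ to write $\Lc_\varepsilon\varphi_\varepsilon$ via the gradient and get $\varphi_\varepsilon\in L^2(0,T;H^1(\Omega))$ from $\mu_\varepsilon\in L^2(0,T;H^1(\Omega))$ and $\Psi'(\varphi_\varepsilon)\in L^\infty(0,T;L^{4/3}(\Omega))$, or work throughout with the weaker nonlocal energy and control nonlinearities by \ref{ass_6}. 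This compactness-and-regularity step, together with the careful justification of the mean-value identity for $\mu_\varepsilon$ used to bound $\mu_\varepsilon$ in $L^2(0,T;H^1(\Omega))$, is the heart of the argument.
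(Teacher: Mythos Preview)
The paper does not prove this theorem from scratch: it simply quotes \cite[Theorems~2.14--2.15]{ScarpaSignori} with $F_1=0$, $\chi=0$, and the only work is to check the structural hypothesis required there, namely that
\[
\Psi''(s)+\inf_{x\in\Omega}\int_\Omega J_\varepsilon(x-y)\,dy\ge c_0>0\quad\text{for all }s\in\R,\ \varepsilon\in(0,\varepsilon_0].
\]
This is obtained from \ref{ass_6} ($\Psi''\ge -C_3$) together with the lower bound $\inf_{x}\int_\Omega J_\varepsilon(x-y)\,dy\ge c/\varepsilon^2$, proved by a sector-plus-rescaling argument for the kernels in \ref{ass_2}. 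Your proposal is thus a genuinely different route: a self-contained Faedo--Galerkin construction rather than a citation.

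Your sketch is largely reasonable, but two points deserve attention. First, the step you flag as the ``main obstacle'' --- upgrading to $\varphi_\varepsilon\in L^2(0,T;H^1(\Omega))$ --- is exactly where the convexity condition above enters, and you do not isolate it. Writing $\Lc_\varepsilon\varphi_\varepsilon=a_\varepsilon\varphi_\varepsilon-J_\varepsilon\!*\!\varphi_\varepsilon$ with $a_\varepsilon(x)=\int_\Omega J_\varepsilon(x-y)\,dy$ and differentiating $\mu_\varepsilon=a_\varepsilon\varphi_\varepsilon-J_\varepsilon\!*\!\varphi_\varepsilon+\Psi'(\varphi_\varepsilon)$ gives $(a_\varepsilon+\Psi''(\varphi_\varepsilon))\nabla\varphi_\varepsilon=\nabla\mu_\varepsilon-(\nabla a_\varepsilon)\varphi_\varepsilon+\nabla J_\varepsilon\!*\!\varphi_\varepsilon$; one can solve for $\nabla\varphi_\varepsilon$ only because $a_\varepsilon+\Psi''\ge c_0>0$, and this is precisely what fixes $\varepsilon_0$. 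Your phrase ``coercivity hidden in $\Ec_\varepsilon$ together with $\varepsilon\le\varepsilon_0$'' should be made into this explicit inequality; the nonlocal Poincar\'e inequality alone does not give $H^1$. Second, in your uniqueness argument the claim that $\tilde\varphi_\Omega=0$ is incorrect: testing the difference of \eqref{eq_WP_Nonloc1} with $\xi=1$ leaves the source term $\int_\Omega[(\Pc\sigma_\varepsilon^{(1)}-\Ac)h(\varphi_\varepsilon^{(1)})-(\Pc\sigma_\varepsilon^{(2)}-\Ac)h(\varphi_\varepsilon^{(2)})]\,dx$, which need not vanish. You must instead test with $(-\Delta_N)^{-1}(\tilde\varphi-\tilde\varphi_\Omega)$ and control $|\tilde\varphi_\Omega|^2$ separately (exactly as in the proof of Theorem~\ref{th_conv}). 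With these two corrections your direct argument goes through and reproduces what \cite{ScarpaSignori} provides.
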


\begin{proof}
    This follows from \cite{ScarpaSignori}, Theorem 2.14 and Theorem 2.15 by setting $F_1=0$ and $\chi=0$ there. Here, $\varepsilon_0>0$ is such that for some $c_0>0$ it holds
    \begin{align*}
        \Psi^{\prime\prime}(s) +  \inf_{x\in\Omega}\int_\Omega J_\varepsilon(x-y)\,dy \geq c_0\quad\text{ for all }s\in\mathbb{R}, \varepsilon\in(0,\varepsilon_0].
    \end{align*}
    The existence of such an $\varepsilon_0>0$ follows from 
\[
\inf_{x\in\Omega}\int_\Omega J_\varepsilon(x-y)\,dy \geq \frac{c}{\varepsilon ^2}\quad\text{ for all }\varepsilon\in(0,1]
\]
for some $c>0$. The latter can be shown by using the regularity of $\partial\Omega$ to find uniform sectors pointing inside $\Omega$, by applying the transformation rule and a rescaling argument together with the properties of $J_\varepsilon$ from \ref{ass_2}.
\end{proof}

\section{Convergence Result}\label{sec_proof}

\begin{Theorem}\label{th_conv}
    Let the assumptions \ref{ass_1}--\ref{ass_6} hold, let $n=3$ and $\varepsilon_0>0$ be as in Theorem \ref{th_Ex_Nonloc}. Moreover, for the initial data $(\varphi_0,\sigma_0)$ to the local system \eqref{eq_Loc1}-\eqref{eq_Loc5} we assume $\varphi_0\in H^3(\Omega)$ with $\partial_\mathbf{n}\varphi_0 = 0$ on $\partial\Omega$ and $\sigma_0\in H^1(\Omega)$ with $0\leq \sigma_0 \leq 1$ a.e.~in $\Omega$. Additionally, for $\varepsilon\in(0,\varepsilon_0]$ let the initial data for the nonlocal system \eqref{eq_NonLoc1}-\eqref{eq_NonLoc5} satisfy $\varphi_{0,\varepsilon}, \sigma_{0,\varepsilon}\in L^2(\Omega)$ and
    \begin{align*}
        \|\varphi_{0,\varepsilon} - \varphi_0\|_{H^1(\Omega)^\prime} + \|\sigma_{0,\varepsilon} - \sigma_0\|_{L^2(\Omega)} + |(\varphi_{0,\varepsilon})_\Omega - (\varphi_0)_\Omega| \leq C\sqrt{\varepsilon}
    \end{align*}
    for some constant $C>0$ independent of $\varepsilon\in(0,\varepsilon_0]$. 
    
    Then there exists a constant $K>0$ such that the weak solution $(\varphi_\varepsilon, \sigma_\varepsilon, \mu_\varepsilon)$ of the nonlocal model \eqref{eq_NonLoc1}-\eqref{eq_NonLoc5} for $\varepsilon\in(0,\varepsilon_0]$ from Theorem \ref{th_Ex_Nonloc} and the strong solution $(\varphi,\sigma,\mu)$ of the local model \eqref{eq_Loc1}-\eqref{eq_Loc5} from Theorem \ref{th_WP_Loc} satisfy for some constant $K>0$ independent of $\varepsilon$,
    \begin{align*}
        \sup_{t\in[0,T]}\|\varphi_\varepsilon(t) - \varphi(t)\|_{H^1(\Omega)^\prime} + \|\varphi_\varepsilon - \varphi\|_{L^2(0,T;L^2(\Omega))} &\leq K\sqrt{\varepsilon}, \\
        \sup_{t\in[0,T]}\|\sigma_\varepsilon(t) - \sigma(t)\|_{L^2(\Omega)} + \|\nabla\sigma_\varepsilon - \nabla\sigma\|_{L^2(0,T;L^2(\Omega))} &\leq K\sqrt{\varepsilon}.
    \end{align*}
\end{Theorem}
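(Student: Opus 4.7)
Set $\bar\varphi:=\varphi_\varepsilon-\varphi$, $\bar\mu:=\mu_\varepsilon-\mu$, $\bar\sigma:=\sigma_\varepsilon-\sigma$. The plan is to run a Gronwall argument on the functional
\[
\Phi(t):=\tfrac12\|\bar\varphi(t)-(\bar\varphi(t))_\Omega\|_{H^1(\Omega)'}^2+\tfrac12|(\bar\varphi(t))_\Omega|^2+\tfrac12\|\bar\sigma(t)\|_{L^2(\Omega)}^2+\int_0^t\!\!\bigl(2\mathcal{E}_\varepsilon(\bar\varphi)+\|\nabla\bar\sigma\|_{L^2(\Omega)}^2\bigr)\,ds,
\]
exploiting the regularity $\varphi\in L^2(0,T;H^3(\Omega))$ of the strong solution in combination with the consistency estimate of Theorem~\ref{th_AbelsHurm}. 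Because $(-\Delta_N)^{-1}$ acts only on mean-zero functions, I first split $\bar\varphi=\widetilde\varphi+(\bar\varphi)_\Omega$ with $\widetilde\varphi:=\bar\varphi-(\bar\varphi)_\Omega$ and observe that choosing the test function $\xi\equiv 1$ in the difference of \eqref{eq_WP_Nonloc1} and \eqref{eq_WP_loc1} gives an ODE
\[
\tfrac{d}{dt}(\bar\varphi)_\Omega=\tfrac{1}{|\Omega|}\!\int_\Omega\!\!\bigl[(\Pc\sigma_\varepsilon{-}\Ac)h(\varphi_\varepsilon)-(\Pc\sigma{-}\Ac)h(\varphi)\bigr]dx,
\]
which by Lipschitz continuity of $h$ and boundedness of $\sigma_\varepsilon,\sigma,h$ is controlled by $C(\|\bar\sigma\|_{L^2}+\|\bar\varphi\|_{L^2})$ and yields $|(\bar\varphi)_\Omega|^2\le C\varepsilon+C\int_0^t\Phi\,ds$ thanks to the initial-data hypothesis.

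\textbf{Core step on the $\varphi$-equation.} I test the difference of \eqref{eq_WP_Nonloc1} and \eqref{eq_WP_loc1} with $w:=(-\Delta_N)^{-1}\widetilde\varphi\in H^1(\Omega)$ and use the Neumann boundary conditions on $\mu_\varepsilon,\mu$ to obtain
\[
\tfrac{1}{2}\tfrac{d}{dt}\|\widetilde\varphi\|_{H^1(\Omega)'}^2=-\!\int_\Omega\!\bar\mu\,\widetilde\varphi\,dx+\!\int_\Omega\!\bigl[(\Pc\sigma_\varepsilon{-}\Ac)h(\varphi_\varepsilon)-(\Pc\sigma{-}\Ac)h(\varphi)\bigr]w\,dx.
\]
The crucial algebraic identity is the decomposition
\[
\bar\mu=\mathcal{L}_\varepsilon\bar\varphi+R_\varepsilon+\bigl(\Psi'(\varphi_\varepsilon)-\Psi'(\varphi)\bigr),\qquad R_\varepsilon:=\mathcal{L}_\varepsilon\varphi+\Delta\varphi,
\]
from which three terms arise. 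The first produces the good quantity $-\int\mathcal{L}_\varepsilon\bar\varphi\,\widetilde\varphi\,dx=-2\mathcal{E}_\varepsilon(\bar\varphi)$ by symmetry of $J_\varepsilon$ and the observation that $\int_\Omega\mathcal{L}_\varepsilon\bar\varphi\,dx=0$. The second is of order $\sqrt{\varepsilon}$ by Theorem~\ref{th_AbelsHurm} since $\varphi\in L^2(0,T;H^3(\Omega))$ satisfies Neumann BC. The third splits: on $\int(\Psi'(\varphi_\varepsilon)-\Psi'(\varphi))\bar\varphi\,dx$ the semiconvexity bound $\Psi''\ge -C_3$ from \ref{ass_6} gives the pointwise lower bound $-C_3\bar\varphi^2$; the mean-correction $(\bar\varphi)_\Omega\int(\Psi'(\varphi_\varepsilon)-\Psi'(\varphi))\,dx$ is handled via \eqref{eq_psi4} together with uniform $L^\infty(0,T;L^4(\Omega))$ bounds on $\varphi_\varepsilon$ (derived from the standard energy identity using \ref{ass_6}) and on $\varphi$ from Theorem~\ref{th_WP_Loc}. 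The source term is estimated by $\|w\|_{L^2}\le C\|\widetilde\varphi\|_{H^1(\Omega)'}$ and boundedness/Lipschitz properties of $h$.

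\textbf{Sigma-equation and closing.} Testing the difference of \eqref{eq_WP_Nonloc3} and \eqref{eq_WP_loc3} with $\bar\sigma$ gives
\[
\tfrac{1}{2}\tfrac{d}{dt}\|\bar\sigma\|_{L^2}^2+\|\nabla\bar\sigma\|_{L^2}^2+\Bc\|\bar\sigma\|_{L^2}^2\le C\bigl(\|\bar\sigma\|_{L^2}^2+\|\bar\varphi\|_{L^2}^2\bigr)
\]
by boundedness and Lipschitz continuity of $h$ and boundedness of $\sigma_\varepsilon,\sigma$ in $[0,1]$. Summing the two energy identities yields an inequality of the shape
\[
\tfrac{d}{dt}\Phi(t)+2\mathcal{E}_\varepsilon(\bar\varphi)+\|\nabla\bar\sigma\|_{L^2}^2\le C\Phi(t)+C'\|\bar\varphi\|_{L^2}^2+C''\varepsilon\,\|\varphi\|_{H^3(\Omega)}^2.
\]
At this point Lemma~\ref{th_DavoliInequ}, applied to the pair $\varphi_\varepsilon,\varphi$, yields $\|\bar\varphi\|_{L^2}^2\le\delta\mathcal{E}_\varepsilon(\varphi_\varepsilon)+\delta\mathcal{E}_\varepsilon(\varphi)+C_\delta\|\bar\varphi\|_{H^1(\Omega)'}^2$; the middle term is uniformly bounded since $\varphi\in H^1(\Omega)$, the first is absorbed by writing $\mathcal{E}_\varepsilon(\varphi_\varepsilon)\le 2\mathcal{E}_\varepsilon(\bar\varphi)+2\mathcal{E}_\varepsilon(\varphi)$ and choosing $\delta$ small enough to swallow $\delta\mathcal{E}_\varepsilon(\bar\varphi)$ into the LHS. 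Gronwall's lemma, together with the hypothesis $\Phi(0)\le C\varepsilon$, then gives $\Phi(t)\le K\varepsilon$ uniformly in $t\in[0,T]$, from which both claimed estimates follow (the $L^2(0,T;L^2(\Omega))$ bound for $\bar\varphi$ comes by integrating the Davoli inequality in time).

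\textbf{Main obstacle.} The delicate point is the treatment of the potential term: one must extract the favourable sign from the semiconvexity without losing the quadratic (or even quartic) growth of $\Psi'$. This forces uniform $L^\infty(0,T;L^4(\Omega))$ control of $\varphi_\varepsilon$ that has to be established beforehand from the energy identity for the nonlocal system (using \ref{ass_6}), and requires careful choice of the small parameter $\delta$ in Lemma~\ref{th_DavoliInequ} in order to absorb $\mathcal{E}_\varepsilon(\bar\varphi)$ rather than $\mathcal{E}_\varepsilon(\varphi_\varepsilon)$ into the dissipation.
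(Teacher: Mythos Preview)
Your overall strategy---test the $\varphi$-difference against $(-\Delta_N)^{-1}(\bar\varphi-(\bar\varphi)_\Omega)$, the $\sigma$-difference against $\bar\sigma$, track the mean separately, split $\bar\mu$ via the consistency remainder $R_\varepsilon=\mathcal{L}_\varepsilon\varphi+\Delta\varphi$ and invoke Theorem~\ref{th_AbelsHurm}, then close with Gronwall---is exactly the route the paper takes. The structure is right.

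There is, however, a genuine gap in how you invoke Lemma~\ref{th_DavoliInequ}. Applying it to the pair $(\varphi_\varepsilon,\varphi)$ gives you
\[
\|\bar\varphi\|_{L^2(\Omega)}^2\le \delta\,\mathcal{E}_\varepsilon(\varphi_\varepsilon)+\delta\,\mathcal{E}_\varepsilon(\varphi)+C_\delta\|\bar\varphi\|_{H^1(\Omega)'}^2,
\]
and after your split $\mathcal{E}_\varepsilon(\varphi_\varepsilon)\le 2\mathcal{E}_\varepsilon(\bar\varphi)+2\mathcal{E}_\varepsilon(\varphi)$ the right-hand side carries the term $3\delta\,\mathcal{E}_\varepsilon(\varphi)$. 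You call this ``uniformly bounded'', which is true, but uniform boundedness is not smallness in $\varepsilon$: since $\mathcal{E}_\varepsilon(\varphi)\to\tfrac12\|\nabla\varphi\|_{L^2}^2>0$, this term is of order one. It survives the Gronwall step as a constant contribution, so you end up with $\Phi(t)\le K$, not $\Phi(t)\le K\varepsilon$, and the claimed $\sqrt{\varepsilon}$ rate is lost.

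The fix is to apply the lemma directly to the difference. Take $f_{\varepsilon_1}=\bar\varphi-(\bar\varphi)_\Omega$ and $f_{\varepsilon_2}=0$ (so $\mathcal{E}_{\varepsilon_2}(0)=0$), which yields
\[
\|\bar\varphi-(\bar\varphi)_\Omega\|_{L^2(\Omega)}^2\le \delta\,\mathcal{E}_\varepsilon\bigl(\bar\varphi-(\bar\varphi)_\Omega\bigr)+C_\delta\|\bar\varphi-(\bar\varphi)_\Omega\|_{H^1(\Omega)'}^2.
\]
Now the only nonlocal energy on the right is $\mathcal{E}_\varepsilon(\bar\varphi-(\bar\varphi)_\Omega)$, which is precisely the dissipation you produced on the left from $\int\mathcal{L}_\varepsilon\bar\varphi\,(\bar\varphi-(\bar\varphi)_\Omega)\,dx=2\mathcal{E}_\varepsilon(\bar\varphi-(\bar\varphi)_\Omega)$, and it can be absorbed for $\delta$ small. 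This is what the paper does (it chooses $\delta=6/(6k_4+5)$), and with this correction your argument goes through. A side benefit is that the detour through uniform $L^\infty(0,T;L^4(\Omega))$ bounds on $\varphi_\varepsilon$ becomes largely unnecessary: once all $\|\bar\varphi\|_{L^2}^2$-type terms are controlled by $\delta\mathcal{E}_\varepsilon(\bar\varphi-(\bar\varphi)_\Omega)+C_\delta\|\bar\varphi-(\bar\varphi)_\Omega\|_{H^1(\Omega)'}^2+|\Omega||(\bar\varphi)_\Omega|^2$, the semiconvexity bound $\Psi''\ge -C_3$ suffices for the potential term.
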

\begin{Remark}\upshape
    We assumed $n=3$ in the theorem because this is also the case in \cite{GarckeLamRocca, ScarpaSignori}. However, note that Theorem \ref{th_WP_Loc}, Theorem \ref{th_Ex_Nonloc} and Theorem \ref{th_conv} should also work in the case $n=2$ because the required embeddings are improved.
\end{Remark}
\begin{proof}
    We denote by $(\varphi_\varepsilon, \mu_\varepsilon, \sigma_\varepsilon)$ the weak solution to the nonlocal model \eqref{eq_NonLoc1}-\eqref{eq_NonLoc5} given by Theorem \ref{th_Ex_Nonloc} and with $(\varphi,\mu,\sigma)$ the unique solution to the local model \eqref{eq_Loc1}-\eqref{eq_Loc5} provided by Theorem \ref{th_WP_Loc}. Then, the functions $\tilde{\varphi} := \varphi_\varepsilon-\varphi, \tilde{\mu} := \mu_\varepsilon-\mu, \tilde{\sigma} := \sigma_\varepsilon-\sigma$ solve the system
	\begin{align}
		\partial_t\tilde{\varphi} &= \Delta\tilde{\mu} + (\Pc\sigma_\varepsilon-\Ac)h(\varphi_\varepsilon) - (\Pc\sigma -\Ac)h(\varphi) &&\text{in }\Omega_T, \label{diff1} \\
		\tilde{\mu} &= \mathcal{L}_\varepsilon\varphi_\varepsilon + \Delta\varphi + \Psi^\prime(\varphi_\varepsilon) - \Psi^\prime(\varphi) &&\text{in }\Omega_T, \label{diff2} \\
		\partial_t\tilde{\sigma} &= \Delta\tilde{\sigma} + \Bc(\sigma-\sigma_\varepsilon)  - \Cc\sigma_\varepsilon h(\varphi_\varepsilon) + \Cc\sigma h(\varphi) &&\text{in }\Omega_T. \label{diff3}
	\end{align}
 in a weak sense. More precisely, the weak formulation is obtained by testing with functions in $H^1(\Omega)$, cf.~the weak formulations \eqref{eq_WP_loc1}-\eqref{eq_WP_loc3} and \eqref{eq_WP_Nonloc1}-\eqref{eq_WP_Nonloc3}.
 
 Testing \eqref{diff1} with $(-\Delta_N)^{-1}(\tilde{\varphi} - \tilde{\varphi}_\Omega)\in H^1(\Omega)$, cf.~the property \eqref{eq_laplace_inverse} for the inverse Neumann Laplacian above, we obtain
 \begin{align}\label{mean1}
			\frac{1}{2}\frac{d}{dt}\|\tilde{\varphi}&-\tilde{\varphi}_{\Omega}\|_{H^{1}(\Omega)^\prime}^2 = -\int_{\Omega}\tilde{\mu}(\tilde{\varphi}-\tilde{\varphi}_{\Omega})\,dx \nonumber\\
			&+ \int_{\Omega}\Big[(\Pc\sigma_\varepsilon-\Ac)h(\varphi_\varepsilon) - (\Pc\sigma -\Ac)h(\varphi)\Big](-\Delta_N)^{-1}(\tilde{\varphi}-\tilde{\varphi}_{\Omega})\,dx.
		\end{align}
  For the second term on the right-hand side of \eqref{mean1}, we have
			\begin{align*}
			&\int_{\Omega}\Big[(\Pc\sigma_\varepsilon-\Ac)h(\varphi_\varepsilon) - (\Pc\sigma -\Ac)h(\varphi)\Big](-\Delta_N)^{-1}(\tilde{\varphi}-\tilde{\varphi}_{\Omega})\,dx \nonumber\\
			&= \int_{\Omega}\Pc\sigma_\varepsilon(h(\varphi_\varepsilon)-h(\varphi))(-\Delta_N)^{-1}(\tilde{\varphi}-\tilde{\varphi}_{\Omega})\,dx \nonumber\\
			&-\int_{\Omega}\Ac(h(\varphi_\varepsilon)-h(\varphi))(-\Delta_N)^{-1}(\tilde{\varphi}-\tilde{\varphi}_{\Omega})\,dx \nonumber\\
			&+ \int_{\Omega}h(\varphi)\Pc\tilde{\sigma}(-\Delta_N)^{-1}(\tilde{\varphi}-\tilde{\varphi}_{\Omega})\,dx =: I_1 + I_2 +I_3.
			\end{align*}
   Using the Lipschitz continuity of $h$, cf.~assumption \ref{ass_4}, and $|\sigma_\varepsilon|\leq 1$ a.e.~in $\Omega_T$ due to Theorem \ref{th_Ex_Nonloc}, we obtain the following estimates, where $L_h$ is the Lipschitz constant of $h$:
   \begin{align}\begin{split}
       |I_1| &\leq \Pc L_h\|\tilde{\varphi}\|_{L^2(\Omega)}\|(-\Delta_N)^{-1}(\tilde{\varphi}-\tilde{\varphi}_{\Omega}))\|_{L^2(\Omega)}\\
		&\leq \tfrac{1}{36}\|\tilde{\varphi}\|_{L^2(\Omega)}^2 + \Pc K\|\tilde{\varphi}-\tilde{\varphi}_{\Omega}\|_{H^1(\Omega)^\prime}^2,  \label{calc3}\end{split}
    \\
        |I_2| &\leq \tfrac{1}{36}\|\tilde{\varphi}\|_{L^2(\Omega)}^2 + \Ac K\|\tilde{\varphi}-\tilde{\varphi}_{\Omega}\|_{H^1(\Omega)^\prime}^2,  \label{calc4} \\
        |I_3| &\leq\tfrac{1}{2}\|\tilde{\sigma}\|_{L^2(\Omega)}^2 + \Pc K\|\tilde{\varphi}-\tilde{\varphi}_{\Omega}\|_{H^{1}(\Omega)^\prime}^2. \label{calc5}
   \end{align}
   
   In the next step, we test \eqref{diff2} with $\tilde{\varphi} - \tilde{\varphi}_{\Omega}$. This yields
			\begin{align}
				\int_{\Omega}\tilde{\mu}(\tilde{\varphi} - \tilde{\varphi}_{\Omega})\,dx = \int_{\Omega}\big(\mathcal{L}_\varepsilon\varphi_\varepsilon + \Delta\varphi\big)(\tilde{\varphi} - \tilde{\varphi}_{\Omega})\,dx + \int_{\Omega}\big(\Psi^\prime(\varphi_\varepsilon) - \Psi^\prime(\varphi)\big)(\tilde{\varphi} - \tilde{\varphi}_{\Omega})\,dx. \label{mean3}
			\end{align}
   	For the first term on the right-hand side of \eqref{mean3}, it holds
			\begin{align}
				\int_{\Omega}\big(\mathcal{L}_\varepsilon\varphi_\varepsilon + \Delta\varphi\big)(\tilde{\varphi} - \tilde{\varphi}_{\Omega})\;dx = \int_{\Omega}\big(\mathcal{L}_\varepsilon\varphi+\Delta\varphi\big)(\tilde{\varphi}-\tilde{\varphi}_{\Omega})\,dx + \int_{\Omega}\mathcal{L}_\varepsilon\tilde{\varphi}(\tilde{\varphi}-\tilde{\varphi}_{\Omega})\,dx.
			\end{align}
			By symmetry of the interaction kernel $J_\varepsilon$, we observe that
			\begin{align*}
				\int_{\Omega}\mathcal{L}_\varepsilon(\tilde{\varphi}-\tilde{\varphi}_{\Omega})(\tilde{\varphi}-\tilde{\varphi}_{\Omega})\,dx 
                = 2\mathcal{E}_\varepsilon(\tilde{\varphi} - \tilde{\varphi}_\Omega).
			\end{align*}
       For the second term in \eqref{mean3}, assumption \ref{ass_5} yields
			\begin{align*}
				\int_{\Omega}\big(\Psi^\prime(\varphi_\varepsilon) - \Psi^\prime(\varphi)\big)(\tilde{\varphi} - \tilde{\varphi}_{\Omega})\,dx \geq -k_4\int_{\Omega}\tilde{\varphi}(\tilde{\varphi}-\tilde{\varphi}_{\Omega})\,dx = -k_4\|\tilde{\varphi}-\tilde{\varphi}_{\Omega}\|_{L^2(\Omega)}^2.
			\end{align*}
   
   Next, we test equation \eqref{diff3} with $\tilde{\sigma}$. Then, we obtain
	\begin{align}\label{calc9}
		\frac{1}{2}\frac{d}{dt}\|\tilde{\sigma}\|_{L^2(\Omega)}^2 + \|\nabla\tilde{\sigma}\|^2_{L^2(\Omega)} 
        &= \int_{\Omega}\big(\Bc(\sigma_S-\sigma_\varepsilon)-\Bc(\sigma_S-\sigma)\big)\tilde{\sigma}\,dx \nonumber\\
		&- \int_{\Omega}\big(\Cc\sigma_\varepsilon h(\varphi_\varepsilon) - \Cc\sigma h(\varphi)\big)\tilde{\sigma}\;dx.
	\end{align}
	For the first term on the right-hand side of \eqref{calc9}, we have
	\begin{align}\label{calc10}
		\int_{\Omega}\big(\Bc(\sigma_S-\sigma_\varepsilon)-\Bc(\sigma_S-\sigma)\big)\tilde{\sigma}\;dx = -\int_{\Omega}\Bc|\tilde{\sigma}|^2\;dx = - \Bc\|\tilde{\sigma}\|_{L^2(\Omega)}^2.
	\end{align}
	For the second term in \eqref{calc9}, we observe
	\begin{align}
		\int_{\Omega}\big(\Cc\sigma_\varepsilon h(\varphi_\varepsilon) - \Cc\sigma h(\varphi)\big)\tilde{\sigma}\,dx &= \int_{\Omega}\Cc\sigma_\varepsilon\big(h(\varphi_\varepsilon)-h(\varphi)\big)\tilde{\sigma}\,dx
		+ \int_{\Omega}\Cc h(\varphi)|\tilde{\sigma}|^2\,dx \nonumber\\
		&\leq K\Cc\|\tilde{\sigma}\|_{L^2(\Omega)}^2 + \frac{1}{36}\|\tilde{\varphi}\|_{L^2(\Omega)}^2,
	\end{align}
	where we again used $|\sigma_\varepsilon|\leq 1$ a.e.~in $\Omega_T$ and the Lipschitz continuity of $h$. Combining the previous estimates, we obtain
 	\begin{align}\label{mean4}
				&\frac{1}{2}\frac{d}{dt}\Big(\|\tilde{\varphi}-\tilde{\varphi}_{\Omega}\|_{H^{1}(\Omega)^\prime}^2 + \|\tilde{\sigma}\|_{L^2(\Omega)}^2\Big) + 2\mathcal{E}_\varepsilon(\tilde{\varphi}-\tilde{\varphi}_{\Omega}) + \|\nabla\tilde{\sigma}\|_{L^2(\Omega)}^2 + \frac{1}{2}\|\tilde{\varphi}-\tilde{\varphi}_\Omega\|_{L^2(\Omega)}^2 \nonumber\\
				&\leq \frac{1}{12}\|\tilde{\varphi}\|_{L^2(\Omega)}^2 + K\|\tilde{\sigma}\|_{L^2(\Omega)}^2 - \int_{\Omega}\big(\mathcal{L}_\varepsilon\varphi + \Delta\varphi\big)(\tilde{\varphi} - \tilde{\varphi}_{\Omega})\,dx + k_4\|\tilde{\varphi}-\tilde{\varphi}_{\Omega}\|_{L^2(\Omega)}^2 \nonumber\\
				& + K\|\tilde{\varphi}-\tilde{\varphi}_{\Omega}\|_{H^{1}(\Omega)^\prime}^2+ \frac{1}{2}\|\tilde{\varphi}-\tilde{\varphi}_\Omega\|_{L^2(\Omega)}^2.
	\end{align}

 Finally, we test \eqref{diff1} with the mean value $\tilde{\varphi}_\Omega$ and obtain
 \[
    \frac{|\Omega|}{2} \partial_t|\tilde{\varphi}_\Omega|^2 = \int_\Omega \partial_t\tilde{\varphi} \tilde{\varphi}_\Omega\,dx = \int_\Omega \left[(\Pc\sigma_\varepsilon-\Ac)h(\varphi_\varepsilon) - (\Pc\sigma -\Ac)h(\varphi)\right] \tilde{\varphi}_\Omega\,dx.
 \]
 With analogous estimates as before, it follows that
 \[
 \partial_t|\tilde{\varphi}_\Omega|^2 \leq \frac{1}{12}\|\tilde{\varphi}-\tilde{\varphi}_\Omega\|_{L^2(\Omega)}^2 + C(\|\tilde{\sigma}\|_{L^2(\Omega)}^2+|\tilde{\varphi}_\Omega|^2).
 \]
Together with estimate \eqref{mean4}, this yields 
\begin{align}
    &\frac{1}{2}\frac{d}{dt}\Big(\|\tilde{\varphi}-\tilde{\varphi}_{\Omega}\|_{H^{1}(\Omega)^\prime}^2 + \|\tilde{\sigma}\|_{L^2(\Omega)}^2 + |\tilde{\varphi}_\Omega|^2\Big) + \mathcal{E}_\varepsilon(\tilde{\varphi}-\tilde{\varphi}_{\Omega}) + \|\nabla\tilde{\sigma}\|_{L^2(\Omega)}^2 + \frac{1}{2}\|\tilde{\varphi}-\tilde{\varphi}_\Omega\|_{L^2(\Omega)}^2 \nonumber\\
    &\leq K\Big(\|\tilde{\varphi}-\tilde{\varphi}_{\Omega}\|_{H^{1}(\Omega)^\prime}^2 + \|\tilde{\sigma}\|_{L^2(\Omega)}^2 + |\tilde{\varphi}_\Omega|^2\Big) + K\|\mathcal{L}_\varepsilon\varphi + \Delta\varphi\|_{L^2(\Omega)}^2,
\end{align}
where we used inequality \eqref{eq_ineqDav} with $\delta = \frac{6}{6k_4+5}$. Finally, the Gronwall inequality and Theorem \ref{th_AbelsHurm} conclude the proof.
\end{proof}
 

\textit{Acknowledgments.} 
C.~Hurm was partially supported by the Graduiertenkolleg 2339 \textit{IntComSin} of the Deutsche Forschungsgemeinschaft  (DFG, German Research Foundation) -- Project-ID 321821685. M.~Moser has received funding from the European Research Council (ERC) under the European Union’s Horizon 2020 research and innovation programme (grant agreement No 948819). The support is gratefully acknowledged. Finally, we thank Daniel Böhme and Jonas Stange for careful proofreading. 

\setcounter{secnumdepth}{0}

\makeatletter
\renewenvironment{thebibliography}[1]
{\section{\bibname}
	\@mkboth{\MakeUppercase\bibname}{\MakeUppercase\bibname}%
	\list{\@biblabel{\@arabic\c@enumiv}}%
	{\settowidth\labelwidth{\@biblabel{#1}}%
		\leftmargin\labelwidth
		\advance\leftmargin\labelsep
		\@openbib@code
		\usecounter{enumiv}%
		\let\p@enumiv\@empty
		\renewcommand\theenumiv{\@arabic\c@enumiv}}%
	\sloppy
	\clubpenalty4000
	\@clubpenalty \clubpenalty
	\widowpenalty4000%
	\sfcode`\.\@m}
{\def\@noitemerr
	{\@latex@warning{Empty `thebibliography' environment}}%
	\endlist}
\makeatother

\footnotesize

\bibliographystyle{siam}

~\\
\textit{(C.~Hurm) Fakultät für Mathematik, Universität Regensburg, Universitätsstraße 31, D-93053 Regensburg, Germany \\ 
E-mail address:} \textsf{christoph.hurm@mathematik.uni-regensburg.de}\\
\newline
\textit{(M.~Moser) Institute of Science and Technology Austria, Am Campus 1, AT-3400 Klosterneuburg\\
E-mail address:} \textsf{maximilian.moser@ist.ac.at}

\end{document}